  \theoremstyle{plain}
    \newtheorem{theorem}{Theorem}[section]
    \newtheorem*{proposition*}{Proposition}
    \newtheorem*{corollary*}{Corollary}
    \newtheorem*{lemma*}{Lemma}
    \newtheorem{conjecture}[theorem]{Conjecture}
  \theoremstyle{definition}
    \newtheorem{definition}[theorem]{Definition}
    \newtheorem*{definition*}{Definition}
  \theoremstyle{remark}
    \newtheorem*{remark}{Remark}
  \newcommand{\bbC}{\mathbb{C}}
  \newcommand{\bbR}{\mathbb{R}}
  \newcommand{\bbV}{\mathbb{V}}
  \newcommand{\bbZ}{\mathbb{Z}}
  \newcommand{\scrA}{\mathscr{A}}
  \newcommand{\scrB}{\mathscr{B}}
  \newcommand{\scrI}{\mathscr{I}}
  \newcommand{\scrU}{\mathscr{U}}
  \newcommand{\scrV}{\mathscr{V}}
  \newcommand{\scrX}{\mathscr{X}}
    \DeclareMathOperator{\Sym}{Sym}
\title{A Finiteness Property of Torus Invariants}
\author{Stella Gastineau and Samuel Tenka}
\begin{document}

\maketitle
\begin{abstract}
In this paper the invariant subring $R_n$ of an algebraic 
torus $T=(\mathbb{C}^\times)^r$ acting
on the multi-homogenous polynomial ring
    $$S^{\boxtimes n}=\bigoplus_{d=0}^\infty (S^{(d)})^{\otimes n},$$
where $S^{(d)}$ is the $d$th graded piece of the polynomial 
ring $S=\mathbb{C}[x_1,\dots,x_k]$, 
is studied from the viewpoint of matrices whose entries sum to zero.
Using these weight matrices we prove that there exists a 
$d_1$ such that for all positive integers $n$, 
the relations of the invariant subring $R_n$ are generated 
in multi-homogenous degree $\leq d_1$.\\
{\bf Grant:} 0943832
\end{abstract}

\section{introduction}
Let $G$ be a reductive group and $S$ a finitely generated 
graded algebra over $\bbC$ concentrated in non-negative degrees 
on which $G$ acts. For all positive integers
$n$ let
\[
	R_n=\bigoplus_{k=0}^\infty R_n^{(k)},\quad\mbox{where } R_n^{(k)}=
	((S^{(k)})^{\otimes n})^G,
\]
where $S^{(k)}$ is the $k$th graded piece of $S$. Note that $R_n$ is a 
$\bbC$-algebra and so is finitely generated. 
The following uniformity conjecture for
the sequence of algebras $\{R_n\}$ was made by Andrew Snowden in 
an unpublished paper \cite{snow}:
\begin{conjecture}[Snowden]\label{conj}
	Keep the above notation.
	\begin{enumerate}
	\item 
		Generators --- There exists a positive integer $d$ such that each algebra $R_n$
		is generated in degrees $\leq d$.
	\item
		Relations --- Let $d$ be as above. For all positive integers $n$ let
		\[
			P_n=\Sym\left(\bigoplus_{k=0}^dR_n^{(k)}\right).
		\]
		There exists a positive integer $s$ such that the kernel of the surjection 
		$P_n\to R_n$ is generated in degrees $\leq s$ as a $P_n${\rm -}module for all $n$.
	\item
		Syzygyies --- There exist integers $s_0,s_1,\dots$ such that
		for each $n$ there exists a resolution
		\[
			\cdots\to F_{n,2}\to F_{n,1}\to F_{n,0}\to R_n\to0
		\]
		of $R_n$ by finite $P_n${\rm -}modules $F_{n,i}$ such that 
		$F_{n,i}$ is generated in degrees $\leq s_i$.
	\end{enumerate}
\end{conjecture}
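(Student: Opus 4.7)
My plan is to specialize to the torus case singled out by the abstract, namely $T=(\bbC^\times)^r$ acting on $S=\bbC[x_1,\dots,x_k]$ via weights $w_1,\dots,w_k\in\bbZ^r$, and to attack part~(b) of the conjecture by recognizing $R_n$ as a toric ring and exploiting the block structure of its defining linear map.

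Because $T$ acts diagonally on monomials, the invariants in $(S^{(d)})^{\otimes n}$ are spanned by the monomials whose exponent matrix has total weight zero, so $R_n$ is the semigroup algebra $\bbC[M_n]$, where
\[
    M_n=\lbr A\in\Mat_{n\times k}(\bbN):\sum_{j,i}a_{ji}w_i=0\rbr,
\]
and multi-homogeneous degree corresponds to the sum of entries of $A$. A natural generating set of $R_n$ is the Hilbert basis $H_n$ of $M_n$, and the kernel of $P_n\twoheadrightarrow R_n$ is the toric ideal $I_n$ generated by binomials corresponding to integer relations $\sum_h u_h h=\sum_h v_h h$ in $M_n$ with $u,v\in\bbN^{H_n}$.

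The first step is to bound the size of Hilbert basis elements uniformly in $n$. A Carath\'eodory-type argument applied to the row weight vectors $\sum_i a_{ji}w_i\in\bbZ^r$ shows that any minimal $A\in M_n$ has at most $r+1$ nonzero rows, and each such row has bounded degree by Gordan's lemma. This gives an $n$-independent degree bound $d_0$ for generators, settling part~(a). With this in hand, the plan for relations is to argue that any binomial in $I_n$ can be $P_n$-linearly reduced to binomials whose total support meets at most $m_0=m_0(k,r)$ rows. Such ``small'' relations live in finitely many toric ideals $I_m$ with $m\le m_0$, each finitely generated, which produces the desired uniform bound $d_1$.

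The main obstacle is this localization step. While bounding generators is a one-shot Carath\'eodory computation on the row images in $\bbR^r$, a binomial in $I_n$ records an equality $\sum_h u_h h=\sum_h v_h h$ in $M_n$ whose supports on either side can sprawl across many rows even when each $h$ itself is small. I expect the resolution to come from a Gr\"obner basis argument in the spirit of Sturmfels: one produces a finite list of ``Markov moves'' depending only on $(k,r)$ that generates the toric ideal of every $M_n$, with the degree of each move providing the bound $d_1$.
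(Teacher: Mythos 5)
Your setup is right — $R_n$ is the semigroup algebra of zero-sum exponent matrices, the kernel is a toric ideal generated by binomials, and the correct strategy is to localize an arbitrary binomial relation to a bounded number of rows and then invoke finite generation of finitely many small toric ideals. But the proposal stops exactly at the step that constitutes the entire content of the theorem: you name the localization of relations as ``the main obstacle'' and then defer it to an unspecified Gr\"obner/Markov-basis argument ``in the spirit of Sturmfels.'' That is not a proof, and it is not a routine citation either: the ideals $\scrI_n$ live in polynomial rings whose variable sets grow with $n$, so finite generation of each $\scrI_n$ individually says nothing about a uniform degree bound, and the standard Gr\"obner machinery gives no control as $n\to\infty$. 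A secondary but genuine error: the claim that a minimal $A\in M_n$ has at most $r+1$ nonzero rows by Carath\'eodory is false. Take $r=1$, weights $w_1=N$, $w_2=-1$: the matrix with one row equal to $(1,0)$ and $N$ rows equal to $(0,1)$ is minimal with $N+1$ nonzero rows. The correct bounds (both for generators and for the row-localization constant) must depend on the weight set $\scrX$ itself, not only on $(k,r)$; this is why the paper routes everything through Gordon's lemma applied to cones built from $\scrX$ and $\scrX^2$.

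For comparison, here is how the paper closes the gap you left open. Given a reduced binomial $F-G$, it forms the matrix $M$ with entries in $\scrX^2$ obtained by concatenating the $i\!j$-th entries of $\|F\|$ and $\|G\|$ (possible because both sides have the same multi-homogeneous degree), so that a zero-sum submatrix of $M$ is simultaneously zero-sum for $F$ and for $G$. A first application of the Brito-type rearrangement bound $D$ plus Gordon's lemma, applied to \emph{columns} of $M$, extracts at most $d_0$ columns forming a zero-sum submatrix; this rewrites $F-G$ modulo relations of degree $\le d_0^2$ so that each side contains a single variable ($f$, resp.\ $g$) of the same multi-homogeneous degree $d\le d_0$. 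A second, analogous bound $n_0$ applied to \emph{rows} then partitions $[n]$ into blocks of size $\le n_0$, each zero-sum for both $\|f\|$ and $\|g\|$, and one walks from $f$ to $g$ by swapping one row-block at a time; each swap is a binomial relation of degree $\le n_0d_0^2$, and the leftover relation has $g$ as a common factor, so induction on total multi-homogeneous degree finishes. If you want to salvage your sketch, you need to supply an explicit analogue of this two-stage column-then-row decomposition (or an equivariant noetherianity argument), since ``small relations generate'' is precisely what must be proved, not assumed.
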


Jonathan Brito proved in \cite{brito}
that Conjecture~\ref{conj}(a) is true when $G$ 
is a complex $r$-torus $T=(\bbC^{\times})^r$ and $S$ is a polynomial ring 
$\Sym V=\bbC[x_1,\dots,x_k]$ where $V$ is a $k$ dimensional polynomial
representation of $T$. 
The purpose of this paper is to prove Conjecture~\ref{conj}(b) for this
specific case. A formal statement of this result is given in
Theorem~\ref{main theorem} (see page~\pageref{main theorem}).

\section{Torus Invariants}
\subsection*{Multi-homogeneous monomials}
Fix a complex $r$-torus $T=(\bbC^\times)^r$ 
and $k$-dimensional polynomial representation $V$ of $T$.
Suppose that the representation $V$ is given by the weight vectors
$w_1,\dots,w_k\in\bbZ^r$. We will denote the set of these weights
by $\scrX$.
Then for any positive integer $n$, the action of $T$ on $V$
naturally extends to an action of $T$ on the
{\em multi-homogeneous polynomial ring}
\[
	S^{\boxtimes n}=\bigoplus_{d=0}^\infty(\Sym^d V)^{\otimes n}
\]
in the following way:
Suppose that 
$\bbV=\{x_1,\dots,x_k\}$ is a basis of $V$ such that for each $i$,
\[
	t\cdot x_i=t^{w_i}x_i=t_1^{w_{i1}}\cdots t_r^{w_{ir}}x_i.
\]
For simplicity we will write $|x_i|$ rather than $w_i$. A pure tensor in $S^{\boxtimes n}$
is a {\em multi-homogeneous
monomial of degree $d$} if each tensor factor is a monomial in $\bbV$ of degree 
$d$. For every multi-homogeneous monomial
\begin{equation}\label{multi-hom}
	f=f_1\otimes\cdots\otimes f_n,\quad\mbox{where }
	f_i=\phi_{i1}\cdots \phi_{id}
\end{equation}
of degree $d$ let
\begin{equation}\label{action}
	t\cdot f=t^{|\phi_{11}|+\cdots+|\phi_{nd}|}f.
\end{equation}

The sum in \eqref{action} is very unorganized. For example, 
it remains the same if you permute the tensor products or if 
you swap out one basis element for another
with an equal weight. The notion of weight matrices given below in 
Definition~\ref{weight matrices} helps organize this sum in a more stuctured
manner.

\begin{definition}\label{weight matrices}
	Suppose $f$ is a multi-homogeneous monomial of degree $d$
	\[
		f=f_1\otimes\cdots\otimes f_n,\quad\mbox{where }
		f_i=\phi_{i1}\cdots\phi_{id}.
	\]
	Then let
	\[	
		\|f\|=
		\left[\!\!\!\begin{array}{ccc}
		|\phi_{11}| & \cdots & |\phi_{1d}| \\
		\vdots & & \vdots \\
		|\phi_{n1}| & \cdots & |\phi_{nd}|
		\end{array}\!\!\!\right].
	\]
	We will call $\|f\|$ the {\em weight matrix} associated to $f$.
\end{definition}

\begin{remark}
	Note that if two or more basis vectors have the same weight vector, then
	a single weight matrix can be associated to more than one multi-homogeneous monomials.
	Furthermore, since multiplication is commutative inside the tensor factors,
	we can permute the entries in any fixed row of $\|f\|$
	and the resulting matrix would still be a weight matrix associated with $f$.
	Since we don't want this ambiguity, we will 
	assume that weight matrices 
	remember which variables are involved 
	in the monomial and the order they are written in.
	Since the concept of a weight matrix is merely an organizational tool, 
	requiring this additional structure does not change the math involved.
\end{remark}

\subsection*{The invariant subring $\bm{R_n}$}
For all positive integers $n$ let
\[\begin{aligned}
	R_n & =(S^{\boxtimes n})^T \\
	& =\left[\bigoplus_{d=0}^\infty (\Sym^d V)^{\otimes n}\right]^T
\end{aligned}\]
be the invariant subring generated by all invariant multi-homogeneous monomials.
Here a multi-homogeneous monomial $f\in S^{\boxtimes n}$ 
of degree $d$ is invariant (with respect to $T$) if and only if 
\[
	\sum_{i=1}^n\sum_{j=1}^d|\phi_{i\!j}|=0.
\]
Equivalently, a multi-homogeneous monomial is invariant if and only
if the weight matrix associated with it is
zero-sum, i.e. its entries sum to zero.

Suppose that $d_0$ is a positive integer
such that each $R_n$ is generated by 
multi-homogeneous monomials in degrees $\leq d_0$.
A proof that such a $d_0$ exists can be found in \cite{brito}, 
which we modestly strengthen in \S\ref{s d_0}. Our main theorem is:

\begin{theorem}\label{main theorem}
	For all positive integers $n$, we will let
	\[
		P_n=\Sym\left(\bigoplus_{d=0}^{d_0}R_n^{(d)}\right).
	\]
	Then exists positive integer $d_1$ such that the kernel $\scrI_n$
	of the natural surjection $P_n\to R_n$ 
	is generated as a $P_n$-module in degrees $\leq d_1$ for all $n$.
\end{theorem}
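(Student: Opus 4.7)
Since $T$ acts through characters, each graded piece of $R_n$ is spanned by invariant monomials --- the multi-homogeneous monomials whose weight matrix is zero-sum --- and multiplication in $R_n$ corresponds to column-concatenation of weight matrices modulo the row-wise commutativity of Definition~\ref{weight matrices}. Hence $R_n$ is an affine semigroup (monomial) algebra and $P_n\twoheadrightarrow R_n$ exhibits $\scrI_n$ as a \emph{toric ideal}. Consequently $\scrI_n$ is generated by binomials
\[
	[M_1]\cdots[M_a]-[M'_1]\cdots[M'_b],
\]
where each $M_\ell,M'_\ell$ is an invariant weight matrix of width $\le d_0$ and the two concatenations $M_1|\cdots|M_a$ and $M'_1|\cdots|M'_b$ represent the same monomial of $S^{\boxtimes n}$ (equivalently, in each row $i\in[n]$ the multiset of entries agrees on the two sides). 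The multi-homogeneous degree of such a binomial is the common width $D=\sum\deg M_\ell$, so the theorem amounts to bounding $D$ uniformly in $n$.

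\textbf{Strategy.} I would argue by Buchberger-style reduction. Call such a binomial \emph{decomposable} if there exist proper non-empty sub-collections $S\subseteq\{M_1,\ldots,M_a\}$ and $S'\subseteq\{M'_1,\ldots,M'_b\}$ whose concatenations already represent the same monomial; then the binomial for $(S,S')$ lies in $\scrI_n$ and, together with the complementary binomial on $(\{M_\ell\}\setminus S,\{M'_\ell\}\setminus S')$, expresses the original relation as a $P_n$-linear combination of two strictly shorter binomials. An induction on $D$ then reduces the theorem to the following claim: there is a threshold $d_1=d_1(\scrX,d_0,r)$, depending only on the weight data, such that every binomial relation of width $>d_1$ is decomposable.

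\textbf{Main obstacle.} The difficulty with the decomposability claim is that $n$ is unbounded, so no pigeonhole indexed by rows is available. My plan is to pigeonhole instead over the finite alphabet $E$ of possible \emph{row-profiles} --- the multisets of at most $d_0$ elements of $\scrX$, of which there are at most $\binom{|\scrX|+d_0-1}{d_0}$. Assigning to each block $M_\ell$ its row-$i$ profile and recording the resulting profile-count vector in $\bbZ^{[n]\times E}$, one expects that when $D$ is large enough, partial sub-collections on the two sides must realize the same profile-count in every row. Upgrading this numerical coincidence to an actual equality of monomials, while preserving the zero-sum constraint in each affected block, is the technical crux; it is the syzygy-level analogue of the Carath\'eodory-style pigeonhole that Brito uses in~\cite{brito} to bound the generators, and I expect the bookkeeping required to track syzygies rather than generators to be the main additional difficulty.
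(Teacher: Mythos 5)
Your identification of $\scrI_n$ as a toric ideal generated by binomials, and of unbounded $n$ as the central obstacle, matches the paper. But the route you sketch has a genuine gap at exactly the point you flag as the crux, and the reduction you propose is not the one that works. First, the decomposability claim --- that every sufficiently wide binomial splits into proper sub-collections $S$, $S'$ of its own factors representing the same monomial --- is stronger than what is needed and is not what the paper establishes; the paper instead rewrites $F-G$ through a chain of \emph{intermediate} monomials built from variables of $P_n$ that need not appear in $F$ or $G$ at all (the variables $f$, $g$, and $m_1,\dots,m_{L-1}$ of \S\ref{s reduce}--\S\ref{s proof}), each consecutive pair differing by a relation of bounded degree. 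Restricting to sub-products of the given factors is too rigid. Second, and more seriously, your proposed pigeonhole records profile-count vectors in $\bbZ^{[n]\times E}$, a lattice whose dimension grows with $n$; any Gordon's-lemma or Carath\'eodory-type finiteness applied there yields bounds depending on $n$, so this cannot produce the uniform $d_1$.

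The idea you are missing is to pair the two sides of the relation into a single matrix $M$ with entries in $\scrX^2\subset\bbZ^{2r}$ (the $ij$-th entry being the concatenation of the $ij$-th entries of $\|F\|$ and $\|G\|$), and to apply the cone/Gordon's-lemma finiteness \emph{twice in the fixed-dimensional lattice $\bbZ^{2r}$}: once to the column sums of $M$ (bounded in norm by Brito's rearrangement result), extracting at most $d_0$ columns forming a zero-sum submatrix and hence a shared variable of equal multi-homogeneous degree on both sides; and once to the row sums (which lie in the finite set $\scrB$ of combinations of at most $d_0$ elements of $\scrX^2$), partitioning the $n$ rows into zero-sum blocks each of size at most $n_0$, where $n_0$ depends only on the weight data. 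One then swaps one block of rows at a time, producing a telescoping sum of binomial relations each of degree at most $n_0d_0^2$; the number of blocks grows with $n$, but that is harmless since only the degree of each generator must be bounded. The final leftover term has a common variable $g$ on both sides, which is factored out to run an induction on total multi-homogeneous degree. Without the passage to row sums in $\bbZ^{2r}$, the uniformity in $n$ does not come out of your argument.
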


\section{Proof of Theorem~\ref{main theorem}}
Before we can prove Theorem~\ref{main theorem}, we would like
to make some clarifications. First, we will clarify the difference
between monomials and degrees in $R_n$ and those in $P_n$. Second we will
elaborate on what relations on $R_n$ look like as elements of $\scrI_n$
and as weight matrices.

The elements of $P_n$ are polynomials
with indeterminates in the set of multi-homogeneous monomials in $R_n$
of degree $\leq d_0$.
Because of this, there are two related notions of polynomials and 
degrees. Therefore, for the sake of clarity, multi-homogeneous 
monomials in $R_n$ of degree $d\leq d_0$  will be called variables in $P_n$
with \emph{multi-homogeneous degree} $d$ and will be denoted by the lowercase
roman letters (e.g., $f$ and $g$). Here the basis elements involved
in these variables will be denoted
with the greek letters (e.g., $\phi, \gamma \in \{x_1,\dots,x_k\}$).
By contrast, the notion of monomials and degrees in $P_n$
will have the usual, understood meaning inside of a polynomial ring. 
These monomials will be denoted by the capital roman letters (e.g., $F$ and $G$).
An example of this notation is seen below in \eqref{relation}.

Next note that $\scrI_n$ is a toric variety. Then $\scrI_n$
is generated by binomials. Consider a reduced binomial $F-G\in\scrI_n$
where
\begin{equation}\label{relation}
\begin{gathered}\begin{aligned}
	F & =f^1\cdots f^p, 
		& \mbox{where }f^\nu & =f^{\nu}_{1}\otimes\cdots\otimes f^{\nu}_n
		& \mbox{and}\quad f^\nu_i &=\phi^\nu_{i1}\phi^\nu_{i2}\cdots\\
	G & =g^1\cdots g^q, 
		& \mbox{where }g^\mu & =g^\mu_1\otimes
		\cdots\otimes g^\mu_n
		& \mbox{and}\quad g^\mu_i &=\gamma^\mu_{i1}\gamma^\mu_{i2}\cdots
\end{aligned}\end{gathered}
\end{equation}
for variables $f^\nu$ and $g^\mu$ in $P_n$. Note that if we forget the order
of the basis elements in each $f^\nu$ and $g^\mu$ 
the products of the $f^\nu$ is the same in $R_n$ as the product of the $g^\mu$ 
in terms of the basis elements.
Therefore, for each fixed row of $\|F\|$, the entries can be permuted 
to form that row of $\|G\|$.
The width of these two weight matrices are the same and 
represent the total multi-homogeneous degree of the relation.

In order to prove Theorem~\ref{main theorem}, we will show we can factor this
permutation into permutations of the matrix elements from a bounded number of 
columns, with each element permuted within its row. Each factor of this permutation 
represents a sub-relation where the degree of this 
sub relation is bounded by $d_0$ times the number of entries swapped.
We'll factor the permutation by induction on the total multi-homogeneous degree of
the relation.

\begin{proof}[Proof of Theorem~\ref{main theorem}]
The first section \S\ref{s d_0} summarizes \cite{brito}
with a slight change: we are finding a bound $d_0$ given by the set $\scrX^2$ rather
than $\scrX$. We use this $d_0$ to find columns that form zero-sum
sub-matrices for our relation.
In \S\ref{s n_0} we find an analogous bound to find rows that form
zero-sum sub-matrices for our relation.

In \S\ref{s reduce} we use our bound $d_0$ to reduce the 
problem to considering binomial relations $F-G$ where $F$ and $G$
respectively contain variable $f$ and $g$ of equal multi-homogeneous degree.
We use this reduction in \S\ref{s proof} we prove that
each relation is generated in degrees $\leq n_0d_0^2$.

\subsection{}\label{s d_0}
It was proven in \cite{brito} that 
there exists a number $D$, dependent only on the representation $(\rho,V)$ of $T$, 
such that given any zero-sum matrix
with entries in $\scrX$, we can rearrange the matrix entries within their rows such that the Euclidean norm of all the column sums of the resulting matrix are 
bounded above by $D$.

Let $\scrA$ be the set of all vectors in $\bbZ^{2r}$ which are linear combinations
of vectors in $\scrX^2$ and of Euclidean norm at most $2D$.
This set is finite, so we can write $\scrA=\{a_1,\dots,a_L\}$, which defines
a rational polyhedral cone
\[
	\sigma=\left\{(\lambda_1,\dots,\lambda_L)\in\bbR^L\,\Big|\,\sum_{i}\lambda_ia_i=0,
	\mbox{and each } \lambda_i\geq 0\right\}.
\]
Let $\Lambda=\sigma\cap\bbZ^L$. Then
$\Lambda$ is finitely generated as a semi-group (see Gordon's Lemma \cite{fult}).
Fix a finite generating set $S$ of $\Lambda$. Then define
\begin{equation}\label{d_0}
	d_0=\max\big\{\lambda_{1}+\dots+\lambda_{L}
	~\big|~(\lambda_1,\dots,\lambda_L)\in S\big\}.
\end{equation}

\subsection{}\label{s reduce}
Fix a positive integer $n$ and 
a reduced binomial relation $F-G\in\scrI_n$ as given in
\eqref{relation} such that 
the Euclidean norms of the column sums of $\|f^\nu\|$ and $\|g^\mu\|$
are bounded above by $D$ given in \S\ref{s d_0}, 
and so the the Euclidean norms of the column
sums of $\|F\|$ and $\|G\|$ are bounded above by $2D$.
 
Let $M$ be the matrix with entries in $\scrX^2$ whose $i\!j$-th entry
is the concatenation of the $i\!j$-th entries of 
$\|F\|$ and $\|G\|$ respectively.
The matrices $\|F\|$ and $\|G\|$ are both zero-sum, and so
the Euclidean norms of the column sums of $M$ are bounded above by $D$. Therefore, there exist $\leq d_0$  columns $\{j_1,\dots,j_d\}$ that form a
zero-sum sub-matrix of $M$. Suppose that each $j_\ell$-th column corresponds 
to the multi-homogeneous monomials $\phi^{\nu_\ell}_{1v_\ell}\otimes\cdots\otimes
\phi^{\nu_\ell}_{nv_\ell}$ and $\gamma^{\mu_\ell}_{1u_\ell}
\otimes\cdots\otimes\gamma^{\mu_\ell}_{nu_\ell}$ of degree $1$ in $S^{\boxtimes n}$.
Let
\[\begin{aligned}
	f & =f_1\otimes \cdots\otimes f_n ,
		& \mbox{where } & f_i=\phi_{iv_1}^{\nu_1}\cdots\phi_{iv_d}^{\nu_d} \\
	g & =g_1\otimes\cdots\otimes g_n,
		& \mbox{where } & g=\gamma_{iu_1}^{\mu_1}\cdots
		\gamma_{iu_d}^{\mu_d}.
\end{aligned}\]
Note that $f$ and $g$ are both variables 
in $P_n$ with equal multi-homogeneous
degree.
Next let $\scrV=\{\nu_1,\dots,\nu_d\}$ and $\scrU=\{\mu_1,\dots,\mu_d\}$ and
\[	F_1=\prod_{\nu\in\scrV}f^{\nu} \quad\mbox{and}\quad
	F_2=\prod_{\mu\in\scrU}g^{\mu}.
\]
Here we do not take the product over all $\nu_\ell$ and $\mu_\ell$ respectively
because there may be repeated indices (i.e., $\nu_\ell=\nu_{\ell'}$) 
and we want $F'$ and $G'$ to be factors of $F$ and $G$ 
respectively. Note that
$F'$ and $G'$ are monomials in $P_n$ in 
degree $\leq d\leq d_0$. Therefore, we have binomial relations
\[
	F_1-f\cdot G_1 \quad\mbox{and}\quad F_2-g\cdot G_2
\]
in $\scrI_n$ in degrees $\leq d_0^2$ for 
appropriate monomials $G_1$ and $G_2$ in $P_n$. 
Furthermore, we have the decomposition.
\[
	F-G=H_1(F_1-f\cdot G_1)
	+(f\cdot H_1\cdot G_1-g\cdot H_2\cdot G_2) 
	+ H_2(F_2-g\cdot G_2)\in\scrI_n
\]
for appropriate monomials $H_1$ and $H_2$ in $P_n$. Therefore, 
$f\cdot H_1\cdot G_1-g\cdot H_2\cdot G_2$ is a binomial relation.
Thus the problem is reduced
to the case of binomial relations such that there is a multi-homogeneous degree $d$
variable in each term for some $d\leq d_0$.

\subsection{}\label{s n_0}
Let $\scrB$ be the subset of vectors in $\bbZ^{2r}$ which are the
linear combinations in of vectors in $\scrX^2$ such that the 
sum of the coefficients is at most $d_0$.
Using the analogous method described in \S\ref{s d_0} for $\scrB$ (analogous to
$\scrA$), we define
$n_0$ (analogous to $d_0$).

\subsection{}\label{s proof}
Fix a positive integer $n$. Let $F-G\in\scrI_n$ be a reduced binomial relation
such as that given in \eqref{relation}, but now assume that both $f=f^1$ 
and $g=g^1$ have multi-homogeneous degree $d\leq d_0$;
define matrix $M$ as in \S\ref{s reduce}.
By \S\ref{s n_0} we can partition $\{I_1,\dots,I_L\}$ of rows $[n]$ 
such that each block of rows is size $\leq n_0$ and forms a zero-sum sub-matrix
of $M$. Suppose that each block $I_\ell$ corresponds the pure tensors 
\[\begin{aligned}
	a_{I_\ell}&=a_1\otimes\cdots\otimes a_n, & 
	\mbox{where }a_i & =\begin{cases}
	1 & \mbox{if $i\notin I_\ell$} \\
	\phi_{i1} \cdots \phi_{id} & \mbox{if $i\in I_\ell$}
	\end{cases}\\
	b_{I_\ell}&=b_1\otimes \cdots\otimes b_n, &
	\mbox{where }b_i & =\begin{cases}
	1 & \mbox{if $i\notin I_\ell$} \\
	\gamma_{i1} \cdots \gamma_{id} & \mbox{if $i\in I_{\ell}$}
	\end{cases}
\end{aligned}\]
Let $m_0=a_{I_1}\cdots a_{I_L}=f$. Then for all $\ell\in[L]$ 
recursively let 
\[
	m_\ell=\frac{m_{\ell-1}}{a_{I_\ell}}b_{I_\ell}.
\]
Note that each $m_\ell$ is a variable in $P_n$ of
multi-homogeneous degree $d$ since the weight matrix
of $m_\ell$ is the weight matrix of $m_{\ell-1}$ with a zero-sum
block of rows replaced by another zero-sum block of rows, and hence
itself zero-sum.
Also notice that this sequence terminates at $m_L=b_{I_1}\cdots b_{I_L}=g$.
Since we are swapping at most $n_0 d_0$ basis elements for each $m_\ell$, 
the sub-relation that represents the swapping 
\[
	m_{\ell-1}\cdot F_\ell-m_\ell\cdot G_\ell\in\scrI_n,
\]
for appropriate monomials $F_\ell$ and $G_\ell$,
has at most degree $n_0d_0^2$.
Furthermore, we have the decomposition
\[
	F-G=H_{1}(f\cdot F_1-m_1\cdot G_1)+\cdots+H_L(m_{L-1}\cdot F_L-g\cdot G_L)
	+(g\cdot H_L\cdot F_L- G)\in\scrI_n
\]
For appropriate monomials $H_\ell$.
Since that last sub-relation can have $g$ factored out of both terms,
we have reduced the total multi-homogeneous degree of the binomial relation. Then
by induction on the total multi-homogeneous degree on relations, that last term
is generated in degrees $\leq n_0d_0^2$.
Hence we have proved that $F-G$ can be generated in degrees $\leq n_0d_0^2$. 
\end{proof}

\end{document}